\lstdefinelanguage{GAP}{%
  morekeywords={%
    Assert,Info,IsBound,QUIT,%
    TryNextMethod,Unbind,and,break,%
    continue,do,elif,%
    else,end,false,fi,for,%
    function,if,in,local,%
    mod,not,od,or,%
    quit,rec,repeat,return,%
    then,true,until,while%
  },%
  sensitive,%
  morecomment=[l]\#,%
  morestring=[b]",%
  morestring=[b]',%
}[keywords,comments,strings]
\newtheorem{theorem}{Theorem}[section]
\newtheorem{lemma}[theorem]{Lemma}
\newtheorem{corollary}[theorem]{Corollary}
\theoremstyle{definition}
\newtheorem{definition}{Definition}[section]
\DeclareMathOperator{\Comm}{Comm}
\newcommand{\Z}{\mathbb{Z}}
\DeclareMathOperator{\SL}{SL}
\title{Baumslag-Solitar relations \\in abstract commensurators of free groups}
\author{Khalid Bou-Rabee and Samuel Young}
\date{August 2019}
\begin{document}

\onecolumn

\maketitle

\begin{abstract}
    Any non-residually finite Baumslag-Solitar group has a non-residually finite image in the abstract commensurator of a nonabelian free group. This gives a new proof (avoiding Britton's Lemma) of the classification of residually finite Baumslag-Solitar groups.
\end{abstract}

\section{Introduction}

Let $G$ be a group. If $G$ is nice enough to have solvable word problem, then typically this problem is solved by one of two ways: the first method finds an algorithm for rewriting words in the group into a normal form (e.g., free groups have irreducible words, hyperbolic groups have Dehn's algorithm, Baumslag-Solitar groups have normal forms given by being an HNN Extension, Braid group elements have normal forms as Coxeter groups). The second method embeds the group into a larger group (usually a linear group) where the word problem can be readily solved  (e.g., free groups and Braid groups are linear). For non-linear groups, this alternative approach fails as too much information is lost through studying representations of the group, but the abstract commensurator can begin to fill this gap.

The \emph{abstract commensurator of $G$}, denoted $\Comm(G)$, is the set of equivalence classes of isomorphisms between finite-index subgroups of $G$, where two isomorphisms are equivalent if they agree on a finite-index subgroup. The abstract commensurator is a group with operation given by composition over a commonly defined finite-index subgroup of $G$.
Any finitely generated subgroup of an abstract commensurator of a surface group $S$ has solvable word problem \cite[Proposition 6]{BS2019}, which gives utility to finding non-trivial images of groups in $\Comm(\pi_1(S, *))$.
Moreover, in \cite{BS2019}, these images are used to prove that the intersection of all finite-index subgroups of the Baumslag-Solitar group $BS(2,3) = \left< a, b : a b^{2} a^{-1} = b^3 \right>$ is not trivial.
In this short article, we complete the result of \cite{BS2019} to the entire class of Baumslag-Solitar groups.
Recall that a \emph{Baumslag-Solitar group} is a group with finite presentation $BS(m,n) := \left< a, b : a^{-1} b^m a = b^n \right>.$

Broadly speaking, this paper pushes the question:
\emph{Let $H$ and $G$ be groups.
What properties of $H$ can we infer from homomorphisms $H \to \Comm(G)$?}
We focus on two fundamental properties of groups in this article:
A group is \emph{Hopfian} if any endormorphism of it is injective.
Let $G$ and $H$ be groups. An element $g \in G$ is \emph{detectable by} $H$ if there exists a homomorphism $\phi: G \to H$ such that $\phi(g) \neq 1$.
A group is \emph{residually finite} if any element is detectable by some finite group.

\begin{theorem}
Let $G = BS(m,n)$ where $|m| \neq |n|$ and $|m| \neq 1$ and $|n| \neq 1$.
Then there exists an element $g$ that is detectable by $\Comm(F_2)$ and is in every finite-index subgroup of $G$. That is, there exists an image of $G$ in $\Comm(F_2)$ that is not residually finite.
\end{theorem}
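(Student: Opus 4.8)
The plan is to split the statement into a group-theoretic input about $G=BS(m,n)$ and one construction inside $\Comm(F_2)$, glued by an elementary remark about finite residuals. Write $N\trianglelefteq G$ for the finite residual, i.e. the intersection of all finite-index subgroups. The remark is: for any homomorphism $\phi\colon G\to\Comm(F_2)$ and any finite-index $H\le\phi(G)$, the preimage $\phi^{-1}(H)$ has finite index in $G$ and hence contains $N$, so $\phi(N)$ lies in the finite residual of $\phi(G)$. Thus it suffices to produce (i) an element $g\in N$, and (ii) a homomorphism $\phi$ with $\phi(g)\neq 1$; then $\phi(g)$ is a nontrivial element lying in every finite-index subgroup of $\phi(G)$, so $\phi(G)$ is the desired non-residually-finite image of $G$. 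I want to highlight that step (ii) does double duty: checking $\phi(g)\neq 1$ inside $\Comm(F_2)$ also certifies $g\neq 1$ in $G$, and this is exactly what replaces the normal-form (Britton) argument advertised in the abstract.

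For (i) I would exhibit a nontrivial element of $N$ of commutator type, $g=[a^{-1}b^{s}a,\,b^{t}]$, with $s,t$ chosen according to $(m,n)$. The mechanism for $g\in N$ is that in any finite quotient $q\colon G\to Q$ the relation makes $q(b)^m$ and $q(b)^n$ conjugate, hence of equal order, so the order $r$ of $q(b)$ satisfies $\gcd(r,m)=\gcd(r,n)$; feeding $|m|\neq|n|$ into this constraint forces the relevant conjugate of a power of $q(b)$ back into $\langle q(b)\rangle$, whence $q(g)=1$. This is precisely the content of Meskin's criterion, and it is where all three hypotheses $|m|\neq|n|$, $|m|\neq1$, $|n|\neq1$ are consumed; I would treat the exact choice of $s,t$ and the verification $g\in N$ as the classical input, deferring the proof that $g\neq 1$ to step (ii).

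The core of the paper is (ii): realizing $a,b$ as explicit commensurations of $F_2=\langle x,y\rangle$. I would set $\overline b$ to be conjugation by $x$, viewed through $F_2\hookrightarrow\Comm(F_2)$. For $\overline a$ I would compare two finite covers of the rose $R$ with $\pi_1(R)=F_2$. Fix $k\ge\max(|m|,|n|)$ and build connected degree-$k$ covers $\Gamma_1,\Gamma_2\to R$ in which $x$ acts as a permutation having a cycle of length $|m|$ (resp. $|n|$) through the basepoint, so that $x^{|m|}$ is a primitive element of $P_1:=\pi_1(\Gamma_1)$ and $x^{|n|}$ is primitive in $P_2:=\pi_1(\Gamma_2)$. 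Both $P_i$ are free of rank $k+1$ and have finite index in $F_2$, so there is an isomorphism $\alpha\colon P_1\to P_2$; since $\mathrm{Aut}$ of a free group is transitive on primitive elements I may arrange $\alpha(x^{|m|})=x^{|n|}$. Taking $\overline a=[\alpha]\in\Comm(F_2)$ gives $\overline a\,\overline b^{|m|}\,\overline a^{-1}=\overline b^{|n|}$, so after the harmless identifications $BS(m,n)\cong BS(n,m)\cong BS(-m,-n)$ the assignment $a\mapsto\overline a$, $b\mapsto\overline b$ extends to a homomorphism $\phi\colon G\to\Comm(F_2)$.

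The main obstacle is proving $\phi(g)\neq 1$, equivalently that $\phi$ does not collapse the commutator. The suggestive point is that $\gamma:=\overline a^{-1}\,\overline b\,\overline a$ is a genuine commensuration and not conjugation by any power of $x$ (indeed $x\notin P_2$, so $\gamma$ is a ``fractional'' analogue of $\overline b$), which is why one can hope $[\gamma,\overline b]\neq1$. To make this rigorous I would either compute the germ of $[\phi(a^{-1}b^{s}a),\phi(b^{t})]$ on a sufficiently deep finite-index subgroup and show it acts as a nontrivial automorphism there, or---more robustly---prove that $\phi$ is injective by a ping-pong argument for $\overline a,\overline b$ acting on the Cantor set $\partial F_2$, using that $\overline b$ translates and $\overline a$ rescales with incompatible attracting/repelling dynamics. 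The freedom left in $\alpha$ beyond the single constraint $x^{|m|}\mapsto x^{|n|}$ (e.g. prescribing how $\alpha$ moves the $y$-generators so as to break compatibility with the $x$-translation) is what I expect to guarantee the non-commuting, and carrying out this verification cleanly is the crux of the argument.
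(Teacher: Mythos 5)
Your overall skeleton matches the paper's: take Meskin's word $\gamma$ in the finite residual as the classical input, send $b$ to conjugation by a generator of $F_2$ and $a$ to (the class of) an isomorphism between finite-index subgroups carrying $x^{|m|}$ to $x^{|n|}$, check the Baumslag--Solitar relation holds in $\Comm(F_2)$ (your verification via $\alpha(x^{|m|}u x^{-|m|})=x^{|n|}\alpha(u)x^{-|n|}$ is the same computation the paper does with its $g:\Delta_1\to\Delta_2$, $g(A^m)=A^n$), and then conclude by showing the image of $\gamma$ is nontrivial. The cover construction you use (degree-$k$ covers of the rose plus transitivity of $\mathrm{Aut}(F_{k+1})$ on primitive elements) is a legitimate variant of the paper's choice of $\Delta_1=\ker(\pi_1)$, $\Delta_2=\ker(\pi_2)$ for the two projections $F_2\to\Z/m\Z\times\Z/n\Z$.

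However, there is a genuine gap exactly where you say the crux is: you never verify $\phi(\gamma)\neq 1$, and neither of your two proposed routes closes it as stated. The ping-pong route is a dead end: proving $\phi$ injective would embed a non-residually finite $BS(m,n)$ into $\Comm(F_2)$, which the paper explicitly states is an open problem, and nothing about your $\alpha$ suggests this particular $\phi$ is injective (its image is a proper, non-residually finite quotient in the paper's construction). The germ-computation route is the paper's actual argument, but it is not automatic from ``the freedom left in $\alpha$'': a careless choice of $\alpha$ can very well make the commutator vanish, so one must pin down $\alpha$ concretely. The paper does this by writing explicit Nielsen--Schreier bases for $\Delta_1$ and $\Delta_2$, defining $g$ on specific basis elements (e.g.\ $[A,B]\mapsto [A,B]^A$, $[A,B^2]\mapsto B^m$, with different prescriptions in each of three cases according to whether $m,n$ are powers of the same prime, have distinct prime divisors, or share all prime divisors --- matching the case-dependence of Meskin's word), and then computing both $\Phi(\gamma_1\gamma_2)([A,B])$ and $\Phi(\gamma_2\gamma_1)([A,B])$ to see they land on different basis elements. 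That explicit choice-plus-computation, with its case analysis, is the substantive content of the theorem, and your proposal stops just short of it.
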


Note that if $|m| = |n|$ or $|m| = 1$ or $|n| = 1$, then the group $BS(m,n)$ is residually finite.
The proof gives a theoretical reason for why the computations  work in \cite{BS2019}. This is required as there are infinitely many isomorphism (commensurable) classes of Baumslag-Solitar groups.
We present two applications of this theorem and its proof. 
The first is immediate:

\begin{corollary}
If $|m| \neq |n|$ and $|m| \neq 1$ and $|n| \neq 1$, then $BS(m,n)$ has a non-residually finite quotient lying in $\Comm(F_2)$. \qed
\end{corollary}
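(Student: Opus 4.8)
The plan is to read the corollary as a direct reformulation of the Theorem. The Theorem supplies a homomorphism $\phi \colon G \to \Comm(F_2)$ together with an element $g \in G$ such that $\phi(g) \neq 1$ while $g$ lies in every finite-index subgroup of $G$. I would take the quotient $Q := \phi(G) \cong G/\ker\phi$ as the candidate: by construction $Q$ is a subgroup of $\Comm(F_2)$ and is a homomorphic image (hence a quotient) of $G$, so the only thing left to verify is that $Q$ fails to be residually finite.

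For that I would invoke the correspondence theorem: the finite-index subgroups of $Q = G/\ker\phi$ are exactly the images $H/\ker\phi$ of finite-index subgroups $H \leq G$ that contain $\ker\phi$. Since $g$ lies in every finite-index subgroup of $G$, it lies in each such $H$, and therefore $\phi(g) = g\ker\phi$ lies in every finite-index subgroup of $Q$. As $\phi(g) \neq 1$, the intersection of all finite-index subgroups of $Q$ is nontrivial, which is precisely the failure of residual finiteness. Equivalently, $\phi(g)$ is a nontrivial element of $Q$ detectable by no finite quotient of $Q$.

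I expect no genuine obstacle here: the entire content sits inside the Theorem, and the corollary merely repackages ``an element in every finite-index subgroup of $G$ that is detectable by $\Comm(F_2)$'' as ``a non-residually-finite quotient inside $\Comm(F_2)$.'' The one point deserving a line of justification is the finite-index correspondence under the quotient map $G \to Q$, which is what guarantees that no finite-index subgroup of $Q$ can separate $\phi(g)$ from the identity.
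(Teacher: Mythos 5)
Your proof is correct and matches the paper's approach exactly: the paper marks this corollary as immediate from Theorem 1.1 (whose statement already asserts ``there exists an image of $G$ in $\Comm(F_2)$ that is not residually finite''), and your argument simply spells out why, via the correspondence theorem, the image $\phi(G) \cong G/\ker\phi$ inherits a nontrivial element lying in all of its finite-index subgroups. The only difference is that you make explicit the finite-index correspondence step that the paper leaves unstated.
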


\noindent
Note a that this was first proved by Meskin \cite{meskin} and our proof differs in that we do not use Britton's Lemma (c.f. \cite{MR3297728}).

Our next application concerns abstract commensurators of pro-$p$ completions.
Recall that for a profinite group $G$, the abstract commensurator of $G$ is defined as above, where finite-index subgroups are replaed by \emph{open} subgroups of $G$.
This notion was first introduced in the very nice paper \cite{MR2813420}, where the notion is extensively studied with many examples. 
Here we show that for any prime $p$, the abstract commensurator of the pro-$p$ completion of any nonabelian free group is not locally residually finite. 
See \S \ref{sec:prop} for the proof of the following:

\begin{corollary}
Let $p$ be a prime.
There exists a non-residually finite image of a Baumslag-Solitar group in the abstract commensurator of the pro-$p$ completion of a non-abelian free group.
\end{corollary}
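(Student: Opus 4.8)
The plan is to transport the homomorphism $\rho : BS(m,n) \to \Comm(F_2)$ produced in the proof of the Theorem into the pro-$p$ world, after first arranging that every subgroup appearing in its construction is open in the pro-$p$ topology. Concretely, I would fix the prime $p$ and take $m = p^i$ and $n = p^j$ with $i \neq j$ and $i,j \geq 1$; then $|m| \neq |n|$ and $|m|,|n| \neq 1$, so the Theorem applies and furnishes an element $g \in BS(m,n)$ lying in every finite-index subgroup together with a homomorphism $\rho$ satisfying $\rho(g) \neq 1$. It then suffices to build a homomorphism $\rho_p : BS(m,n) \to \Comm(\widehat{F}_p)$, where $F$ is a nonabelian free group, with $\rho_p(g) \neq 1$: since $g$ lies in every finite-index subgroup of $BS(m,n)$, its image lies in every finite-index subgroup of $\rho_p(BS(m,n))$ (each such subgroup pulls back to a finite-index subgroup of $BS(m,n)$ containing $g$), so $\rho_p(g)$ is a nontrivial element of the finite residual of the image, exhibiting it as non-residually finite.

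Next I would define $\rho_p$ by reading the \emph{same} partial isomorphisms used for $\rho$ inside $\widehat{F}_p$. The representatives of $\rho(a)$, $\rho(b)$, and of $\rho(g)$ are isomorphisms $\phi : U \to V$ between finite-index subgroups of $F$, and with the choice $m = p^i$, $n = p^j$ these subgroups are defined by homomorphisms of $F$ onto finite $p$-groups (their indices being governed by $m$, $n$, and $\LCM(m,n)$), hence are open in the pro-$p$ topology of $F$. By functoriality of the pro-$p$ completion, each such $\phi$ induces an isomorphism $\widehat{\phi}_p : \widehat{U}_p \to \widehat{V}_p$. To read this as an element of $\Comm(\widehat{F}_p)$ I must identify $\widehat{U}_p$ with the closure $\overline{U}$ of $U$ in $\widehat{F}_p$, which is an open subgroup.

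This identification is the technical heart of the argument and, I expect, the main obstacle: it is exactly where the restriction to pro-$p$-open subgroups is essential, and why $m$ and $n$ must be powers of $p$. The claim is that for $U$ open in the pro-$p$ topology the canonical surjection $\widehat{U}_p \twoheadrightarrow \overline{U}$ is an isomorphism. I would prove this using the Nielsen--Schreier theorem and its pro-$p$ analogue: $U$ is free and $\widehat{U}_p$ is free pro-$p$ of rank $1 + [F:U](\mathrm{rk}\,F - 1)$, while $\overline{U}$ is an open subgroup of the free pro-$p$ group $\widehat{F}_p$ of the same index $[F:U]$, hence free pro-$p$ of the same rank by the pro-$p$ Schreier index formula. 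A continuous surjection between free pro-$p$ groups of equal finite rank is an isomorphism, since finitely generated pro-$p$ groups are Hopfian. Granting this, sending a class $[\phi] \in \Comm(F_2)$ supported on pro-$p$-open subgroups to $[\overline{\phi}] \in \Comm(\widehat{F}_p)$ is well defined (equivalent and composable representatives have equivalent and composable completions) and multiplicative on the subgroup of $\Comm(F_2)$ generated by such classes; composing with $\rho$ yields $\rho_p$.

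Finally I would verify $\rho_p(g) \neq 1$. Since $F$ is residually $p$ it embeds in $\widehat{F}_p$, and every open $W \leq \widehat{F}_p$ meets $F$ in a finite-index subgroup $W \cap F$ (as $F$ is dense, $F$ surjects onto the finite quotient $\widehat{F}_p / W_0$ for an open normal $W_0 \leq W$). Suppose $\rho_p(g) = 1$; then some open $W$ satisfies $\overline{\psi}|_W = \mathrm{id}$, where $\psi$ is a representative of $\rho(g)$. For $u \in W \cap F$ we have $\psi(u) = \overline{\psi}(u) = u$, so $\psi$ restricts to the identity on the finite-index subgroup $W \cap F$ of $F$, forcing $\rho(g) = 1$ in $\Comm(F_2)$ and contradicting the Theorem. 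Hence $\rho_p(g) \neq 1$, and the image of $BS(p^i,p^j)$ in $\Comm(\widehat{F}_p)$ is not residually finite.
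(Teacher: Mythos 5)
Your proposal is correct, and it reaches the corollary by the same overall strategy as the paper: choose $m,n$ to be powers of $p$ so that every subgroup occurring in the construction of $\rho$ is open in the pro-$p$ topology of $F_2$ (the paper phrases this as: the image of $BS(p,p^2)$ lands in $P_{p,k}=\Comm_p(F_k)$, the classes of isomorphisms between subnormal subgroups of $p$-power index, which is the same class of subgroups), and then transport these commensurators to $\Comm(\widehat{F_2}^p)$. The differences lie in how the two technical steps are executed, and they are genuine. For the extension step, the paper argues that any isomorphism between $p$-power-index subnormal subgroups is continuous for the topology induced from $\widehat{F_2}^p$ and therefore extends uniquely by density to an isomorphism of open subgroups; you instead invoke functoriality of the pro-$p$ completion and identify $\widehat{U}_p$ with the closure $\overline{U}$ by rank counting (Nielsen--Schreier, the pro-$p$ Schreier index formula for open subgroups of free pro-$p$ groups, and Hopficity of finitely generated pro-$p$ groups); this identification is a clean, quotable substitute for the paper's continuity argument, and it is essentially the same fact packaged differently. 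For non-triviality, the paper proves the stronger statement that $P_{p,k}$ embeds into $\Comm(\widehat{F_2}^p)$, using the unique root property of free groups; you need only $\rho_p(g)\neq 1$, which you obtain more elementarily by restricting back to the dense copy of $F$: if the extension were the identity on an open subgroup $W$, the original representative would be the identity on the finite-index subgroup $W\cap F$, contradicting the Theorem. What the paper's route buys is an embedding of all of $\Comm_p(F_k)$, a lemma of independent interest for its later questions; what yours buys is independence from the unique root property and an explicit isomorphism $\widehat{U}_p\cong\overline{U}$. One point you should tighten: the claim that your extension map is multiplicative on classes with pro-$p$-open representatives presupposes that such classes are closed under composition, i.e., that an isomorphism between pro-$p$-open subgroups pulls pro-$p$-open subgroups back to pro-$p$-open subgroups. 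This is true --- it is exactly what the paper's subnormality condition encodes --- but you should either prove it (via normal cores, or the equivalence of pro-$p$-openness with subnormality of $p$-power index) or sidestep it by defining $\rho_p$ only on the generators $a,b$ and verifying that the Baumslag--Solitar relation holds among the extensions, which follows by density and continuity from the relation $\psi\circ\phi^m\circ\psi^{-1}=\phi^n$ already established in $\Comm(F_2)$.
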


It still remains open whether there is a non-residually finite Baumslag-Solitar group that embeds inside the abstract commensurator of a nonabelian free group. Please see \S \ref{sec:furtherdirections} for additional questions and suggestions for further directions.

\paragraph{Acknowledgements}
We are grateful to Yves Cornulier, Gilbert Levitt, and Daniel Studenmund for useful comments and corrections on a previous draft.

\section{Preliminaries}

\subsection{Abstract Commensurators}
We would like to define a notion of Comm$_{p}$($F_{k}$), some restriction on the definition of Comm($F_k$) which will enable us to study its local structure. For brevity, we will denote this as $P_{p,k}$. 

\theoremstyle{definition}
\begin{definition}
The abstract $p$-commensurator $Comm_p(G)$ is the set of equivalence classes of isomorphisms $\phi: H_{1} \rightarrow H_{2}$ between finite index subnormal subgroups of $p$-power index $H1, H2$ sn $G$, where two isomorphisms $\phi_{1} \sim \phi_{2}$ are equivalent if $\phi_{1} = \phi_{2}$ on a finite index subgroup of $G$. This is a group under the operation of composition over a commonly defined finite index subnormal subgroup of $p$-power index. We call elements of $Comm_p(G)$ $p$-commensurators of $G$.
\end{definition}

\begin{lemma}
$Comm_p(G)$ is a group under composition, where the composition is defined as in $Comm(G)$: given two isomorphisms $\Psi: H_1 \rightarrow H'_1$ and $\Phi: H_2 \rightarrow H'_2$, we define the product $\Phi\circ\Psi: \Psi^{-1}(H'_1 \cap H_2)  \rightarrow \Phi(H'_1 \cap H_2)$.
\end{lemma}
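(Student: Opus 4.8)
The plan is to verify that $\Comm_p(G)$ is a group by mirroring the standard proof that $\Comm(G)$ is a group, paying attention only to the one new ingredient: that the class $\mathcal{C}$ of subnormal subgroups of $p$-power index in $G$ is closed under the operations used to build composites. Once this closure is established, every group axiom is inherited from the $\Comm(G)$ argument essentially verbatim, since the composition formula is literally the same.

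First I would record the two closure properties of $\mathcal{C}$. For intersections, $[G : H_1 \cap H_2]$ divides $[G:H_1]\,[G:H_2]$, so it is again a power of $p$, while the intersection of two subnormal subgroups is subnormal; hence $H_1 \cap H_2 \in \mathcal{C}$. For transport along isomorphisms, suppose $\Psi \colon H_1 \to H_1'$ is one of the isomorphisms in play and $K \le H_1'$ lies in $\mathcal{C}$. Then $[H_1 : \Psi^{-1}(K)] = [H_1' : K]$ divides the $p$-power $[G:K]$, and multiplying by the $p$-power $[G:H_1]$ shows $[G:\Psi^{-1}(K)]$ is a $p$-power. For subnormality I use three standard facts: $K$ subnormal in $G$ together with $K \le H_1'$ forces $K$ subnormal in $H_1'$ (intersect a subnormal series $K \trianglelefteq \cdots \trianglelefteq G$ termwise with $H_1'$); an isomorphism carries subnormal subgroups to subnormal subgroups, so $\Psi^{-1}(K)$ is subnormal in $H_1$; and subnormality is transitive, so from $H_1$ subnormal in $G$ we conclude $\Psi^{-1}(K)$ is subnormal in $G$. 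The same argument applied to $\Phi$ shows images $\Phi(K) \in \mathcal{C}$.

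Applying these with $K = H_1' \cap H_2$, which lies in $\mathcal{C}$ by the intersection property, the domain $\Psi^{-1}(H_1' \cap H_2)$ and the codomain $\Phi(H_1' \cap H_2)$ both lie in $\mathcal{C}$, and $\Phi \circ \Psi$ restricts to an isomorphism between them; thus composition stays inside the data defining $\Comm_p(G)$. Well-definedness on equivalence classes then follows as in the ordinary commensurator: replacing $\Psi$ and $\Phi$ by equivalent isomorphisms that agree with them on finite-index subgroups, the two composites agree on the intersection of those subgroups with the relevant domains, which is again a finite-index member of $\mathcal{C}$ by the intersection property.

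Finally, the group axioms. Associativity reduces to associativity of partially defined composition together with the fact that all intermediate domains, being finite intersections of members of $\mathcal{C}$, again lie in $\mathcal{C}$. The identity is the class of $\mathrm{id}_G$, and the inverse of the class of $\Psi \colon H_1 \to H_1'$ is the class of $\Psi^{-1} \colon H_1' \to H_1$, which is admissible precisely because $H_1, H_1' \in \mathcal{C}$ by hypothesis. I expect the only real obstacle to be the subnormality bookkeeping in the second step: the $p$-power-index condition behaves multiplicatively and is routine, but one must genuinely invoke closure of subnormality under intersection, under restriction to an overgroup, under isomorphism, and under transitivity to see that the subnormal condition survives composition. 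This is exactly the point at which the definition of $\Comm_p$ differs from that of $\Comm$, and where the argument is not purely formal.
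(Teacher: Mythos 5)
Your proposal is correct and takes essentially the same approach as the paper, whose entire proof is the one-line assertion that ``composition respects subnormality and $p$-power index''; you simply supply the closure arguments (intersections, transport along isomorphisms, transitivity of subnormality) that this assertion relies on. One small caution: the opening claim that $[G:H_1\cap H_2]$ divides $[G:H_1]\,[G:H_2]$ is false for arbitrary subgroups (two distinct subgroups of index $3$ in $S_3$ intersect in index $6$) and genuinely requires the subnormality of one of the two subgroups --- provable by the same termwise intersection of a subnormal series that you already use later --- so that step should cite subnormality rather than be stated as a generic index fact.
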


\begin{proof}
Note that composition respects subnormality and p-power index, so the operation is well-defined. 
\end{proof}

\theoremstyle{definition}
\begin{definition}
Let $F_k$ be the free group of rank $k$. We define $P_{p,k} := Comm_p(F_k)$.
\end{definition}

\begin{corollary}
$P_{p,k}$ embeds in Comm($F_k$).
\end{corollary}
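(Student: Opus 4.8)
The plan is to exhibit the natural forgetful map
$\iota \colon P_{p,k} = Comm_p(F_k) \to \Comm(F_k)$ and to verify that it is an injective homomorphism. Since every subnormal subgroup of $p$-power index in $F_k$ is in particular a finite-index subgroup, any isomorphism $\phi\colon H_1 \to H_2$ representing a class in $Comm_p(F_k)$ is simultaneously an isomorphism between finite-index subgroups, and so represents a class $[\phi]$ in $\Comm(F_k)$. I would define $\iota([\phi]_{Comm_p}) := [\phi]_{\Comm}$ and then run through three routine checks, isolating the one point that carries real content.

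First I would check well-definedness. The equivalence relation on $Comm_p(F_k)$ declares $\phi_1 \sim \phi_2$ precisely when they agree on a finite-index subgroup of $F_k$, and this is exactly the relation used to define $\Comm(F_k)$. Hence equivalent representatives in the source are sent to equivalent classes in the target, and $\iota$ is well-defined on equivalence classes. Next, the lemma above defines composition in $Comm_p(F_k)$ by the identical formula $\Phi \circ \Psi$ used in $\Comm(F_k)$, so $\iota$ visibly respects products and is a group homomorphism.

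The crux is injectivity, which I would establish by showing $\ker \iota$ is trivial. Suppose $\iota([\phi]_{Comm_p}) = [\mathrm{id}]_{\Comm}$. By definition of the identity class in $\Comm(F_k)$, the map $\phi$ then restricts to the identity on some finite-index subgroup $K \le F_k$. Because the equivalence relation defining $Comm_p(F_k)$ permits agreement on an \emph{arbitrary} finite-index subgroup rather than only one of $p$-power index, this same $K$ already witnesses $[\phi]_{Comm_p} = [\mathrm{id}]_{Comm_p}$. Thus the kernel is trivial and $\iota$ is injective.

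The main obstacle to keep in view is this final step, and the reason the argument succeeds is exactly the form chosen for the equivalence relation in the definition of $Comm_p(F_k)$. If one instead insisted that the witnessing subgroup itself be subnormal of $p$-power index, injectivity would require the additional fact that every finite-index subgroup of $F_k$ contains a finite-index subnormal subgroup of $p$-power index, and this can fail: a subgroup of index coprime to $p$ contains no subnormal subgroup of $p$-power index, since any contained subgroup has index divisible by that coprime factor. With the equivalence relation taken over all finite-index subgroups, no such auxiliary lemma is needed and the embedding follows immediately.
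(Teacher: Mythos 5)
Your main argument is correct for the definition exactly as the paper states it: since the equivalence relation on $\Comm_p(F_k)$ is declared to be agreement on an \emph{arbitrary} finite-index subgroup, well-definedness of the forgetful map and triviality of its kernel are immediate, just as you say. The paper, however, takes a genuinely different route. It invokes the unique root property of free groups: if two commensurators agree on some finite-index subgroup $K$, then for any $x$ in the intersection of their domains some power $x^n$ lies in $K$, so the two images of $x$ have equal $n$-th powers and hence are equal. Thus agreement on one finite-index subgroup automatically upgrades to agreement on \emph{every} subgroup on which both maps are defined --- in particular on the intersection of the two $p$-power-index subnormal domains, which is itself subnormal of $p$-power index in $F_k$. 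What this buys is robustness: the embedding exists no matter which form of the equivalence relation on $\Comm_p$ one adopts, whereas your argument leans entirely on the looser form.

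That robustness exposes the one real error in your write-up, namely your closing claim that under the stricter relation (witnessing subgroup required to be subnormal of $p$-power index) injectivity ``would require'' every finite-index subgroup of $F_k$ to contain a subnormal subgroup of $p$-power index, and so could fail. You are right that this containment fails for subgroups of index coprime to $p$, but wrong that it is needed: by the unique-root argument above, agreement on \emph{any} finite-index subgroup already forces agreement on the common $p$-power-index subnormal domain, so the two candidate equivalence relations coincide for $F_k$ and injectivity holds either way. In short, your proof is valid for the definition as written, but the paper's proof establishes the stronger statement that survives if the definition is tightened, and your final paragraph's counterfactual analysis of that tightened definition is incorrect.
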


\begin{proof}
Recall that an element of $P_{p,k}$ is an equivalence class of isomorphisms between finite index subnormal subgroups of $p$-power index in $F_k$, and so any $p$-commensurator in $P_{p,k}$ is also a commensurator in Comm($F_k$). Since $F_k$ has the unique root property, two commensurators of $F_k$ are equal if and only if they are equal on some finite index subgroup. If two $p$-commensurators are in the same equal equivalence class in Comm($F_k$), they must agree on some finite index subgroup, and thus they agree on all finite-index subgroups over which they are both defined. Then the $p$-commensurators are equivalent, and so the inclusion is injective. 
\end{proof}

\section{Non-residually finite Baumslag-Solitar groups}
\begin{theorem}
If $m,n$ are integers such that $|m| \neq |n|, |m| \neq 1, |n| \neq 1$, then BS($m,n$) is not residually finite. 
\end{theorem}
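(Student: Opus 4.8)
The plan is to exhibit a single nontrivial element $w \in G$ that is annihilated by every homomorphism from $G$ to a finite group; since such a $w$ then lies in every finite-index normal subgroup, it is detectable by no finite group and $G$ fails to be residually finite. In fact the main theorem (Theorem 1.1) already produces an element $g$ that is simultaneously detectable by $\Comm(F_2)$ --- hence nontrivial in $G$ --- and contained in every finite-index subgroup, so the present statement is its immediate consequence. The purpose of the argument below is to explain, self-containedly, why an explicit such element survives in $G$ yet dies in every finite quotient.

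First I would record the constraint that the defining relation imposes on finite images. Let $\phi\colon G \to Q$ be any homomorphism with $Q$ finite, and write $\beta = \phi(b)$, $\alpha = \phi(a)$, and $d = \mathrm{ord}(\beta)$. The relation gives $\alpha^{-1}\beta^{m}\alpha = \beta^{n}$, so $\beta^{m}$ and $\beta^{n}$ are conjugate in $Q$ and therefore have equal order; reading off the orders of powers inside the cyclic group $\langle \beta \rangle$ yields $\gcd(d,m) = \gcd(d,n)$. Because $|m| \neq |n|$, there is a prime $p$ whose exponent in $m$ differs from its exponent in $n$; say $v_p(m) \neq v_p(n)$ and set $s = \min(v_p(m), v_p(n))$. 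Comparing $p$-adic valuations in $\gcd(d,m) = \gcd(d,n)$ forces $\min(v_p(d), v_p(m)) = \min(v_p(d), v_p(n))$, which is possible only when $v_p(d) \le s$. Thus in every finite quotient the $p$-part of the order of $\phi(b)$ is bounded by $p^{s}$, uniformly in $\phi$.

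With this bound in hand I would choose $w$ to be a suitable commutator-type word in $b$ and its conjugate $a^{-1}ba$, engineered so that the identity $\phi(w) = 1$ holds automatically whenever $v_p(\mathrm{ord}\,\phi(b)) \le s$; the previous paragraph then gives $\phi(w) = 1$ for all finite $\phi$, so $w$ lies in every finite-index normal subgroup. The delicate point --- and the main obstacle --- is the reverse assertion that this same $w$ is nontrivial in $G$. Here the hypotheses $|m| \neq |n|$, $|m| \neq 1$, $|n| \neq 1$ are essential: when any of them fails $BS(m,n)$ is residually finite and such a $w$ collapses, so the nontriviality argument must consume exactly these assumptions. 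Classically one verifies $w \neq 1$ through the HNN normal form and Britton's Lemma, and the contribution of this paper is to bypass that machinery: the explicit homomorphism $G \to \Comm(F_2)$ constructed for Theorem 1.1 carries $w$ to a nontrivial abstract commensurator, which certifies $w \neq 1$ directly. Combining the two halves, $w$ is a nontrivial element of $G$ lying in every finite-index normal subgroup, so $G$ is not residually finite.
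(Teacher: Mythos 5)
Your proposal correctly identifies the two-part structure that the paper itself uses --- produce a word $w$ that dies in every finite quotient, then certify $w \neq 1$ by mapping $BS(m,n)$ into $\Comm(F_2)$ --- and your opening computation is sound: conjugate elements have equal order, hence $\gcd(d,m)=\gcd(d,n)$ for $d = \mathrm{ord}\,\phi(b)$, which bounds $v_p(d)$ by $\min(v_p(m),v_p(n))$. But neither half of the proof is actually carried out. For the first half you never exhibit $w$: you say it can be ``engineered'' so that the valuation bound forces $\phi(w)=1$, but that engineering is precisely the nontrivial content of Meskin's theorem, which the paper imports as a citation with explicit words that moreover depend on a case analysis of the factorizations of $m$ and $n$ (the word $[a^{-1},b,b^{p^k}]$ when $m,n$ are powers of the same prime $p$; the word $[aba^{-1},b^{m/p}]$ when some prime $p$ divides $m$ but not $n$; the word $[ab^{k}a^{-1},b^{m/p}]$ in the remaining case, where $k$ divides both and $p$ divides $m/k$ but not $n/k$). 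Without a concrete $w$ and a verification that the bound on $v_p(\mathrm{ord}\,\phi(b))$ kills it in every finite quotient, this half is an assertion, not an argument.

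The second half is circular. You write that ``the explicit homomorphism $G \to \Comm(F_2)$ constructed for Theorem 1.1 carries $w$ to a nontrivial abstract commensurator.'' But within this paper Theorem 1.1 has no independent proof: it is established exactly by the three lemmas that prove the present statement, namely by constructing $\Delta_1 = \ker\bigl(F_2 \to \Z/m\Z \times \Z/n\Z\bigr)$ and its twin $\Delta_2$, taking $\phi$ to be conjugation by $A$ and $\psi$ to be represented by an isomorphism $g : \Delta_1 \to \Delta_2$ with $g(A^m)=A^n$, checking the relation $\psi \phi^{m} \psi^{-1} = \phi^{n}$ so that $a \mapsto \psi$, $b \mapsto \phi$ defines a homomorphism $\Phi$, and finally choosing $g$ on an explicit Nielsen--Schreier basis so that a direct computation shows $\Phi(\gamma)$ moves the basis element $[A,B]$, whence $\gamma \neq 1$. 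All of that construction and computation --- the entire mathematical content of the theorem, and the part that replaces Britton's Lemma --- is what your proposal defers to. A blind proof cannot invoke the paper's main theorem as a black box; as written, your text is an accurate outline of the proof strategy, but it contains a genuine gap in each of its two halves.
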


We will deal with this proof via three lemmas, which deal with the possible prime factorizations of $m$ and $n$. In the first case, $m$ and $n$ are powers of the same prime $p$. If they are not powers of the same prime, then $m$ and $n$ must either have different prime divisors, or different powers of some prime $p$ in their prime factorization. 

In each case, we will follow a similar construction. We construct two finite index subgroups of $F_2$, and define isomorphisms between them on their generating elements, which are elements of Comm($F_2$). There is a homomorphism defined on generators from the particular BS($m,n$) group under consideration onto these elements of Comm($F_2$). We then show a word $\gamma$ which is the residual finiteness kernel of BS($m,n$). This map will product a non-trivial image of our chosen word $\gamma$, and thus we have that $\gamma$ is not the identity in BS($m,n$). 

Our proofs rely on work by Meskin \cite{meskin} and also Bou-Rabee and Studenmund \cite{BS2019}. As a remark, Meskin defines the Baumslag-Solitar groups via the reverse conjugation. Thus we switch $a^{-1}$ and $a$ in the words given. 

\begin{lemma}
If $m,n$ are powers of the same prime and $|m| \neq |n|, |m| \neq 1, |n| \neq 1$, then BS($m,n$) is not residually finite. 
\end{lemma}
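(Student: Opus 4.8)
The plan is to split the proof into two independent parts, following the template announced for the section: (A) exhibit an explicit word $\gamma$ that dies in every finite quotient of $\BS(m,n)$, and (B) certify that $\gamma \neq 1$ by constructing a homomorphism $\phi : \BS(m,n) \to \Comm(F_2)$ and checking that $\phi(\gamma)$ is a nontrivial commensurator. Writing $m = \pm p^i$ and $n = \pm p^j$, I may assume $1 \le i < j$ after possibly interchanging the roles of the two relators. The virtue of part (B) is that it lets me verify nontriviality of $\gamma$ inside $\Comm(F_2)$, thereby avoiding the normal-form and Britton's-lemma computation in $\BS(m,n)$ itself.

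For part (A), let $Q$ be any finite quotient and let $\bar a, \bar b$ be the images of the generators, so $\bar a^{-1} \bar b^{\,m} \bar a = \bar b^{\,n}$. Since conjugate elements have equal order, $\mathrm{ord}(\bar b^{\,m}) = \mathrm{ord}(\bar b^{\,n})$, and writing $d = \mathrm{ord}(\bar b)$ this forces $\gcd(d, p^i) = \gcd(d, p^j)$, hence $v_p(d) \le i$ because $i < j$. Consequently $N := \langle \bar b^{\,m} \rangle = \langle \bar b^{\,p^i} \rangle$ is cyclic of order coprime to $p$; moreover $\bar a^{-1} N \bar a = \langle \bar b^{\,n} \rangle = N$ (as $\gcd(p^{\,j-i}, |N|) = 1$), and $\bar b$ centralizes $N$, so $N \trianglelefteq Q$. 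Therefore every conjugate of $\bar b^{\,m}$ lies in the abelian group $N$ and commutes with $\bar b^{\,m}$. I conclude that for any word $g$ the commutator $\gamma = [\,b^{m},\, g^{-1} b^{m} g\,]$ maps to $1$ in $Q$; as $Q$ was arbitrary, $\gamma$ lies in every finite-index subgroup of $\BS(m,n)$.

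For part (B) I will build two finite-index subgroups $H_1, H_2 \le F_2$ together with an isomorphism representing $\beta := \phi(b)$, and a commensurator $\alpha := \phi(a)$ that conjugates $\beta^m$ to $\beta^n$, so that the assignment $a \mapsto \alpha$, $b \mapsto \beta$ respects the defining relation and extends to a homomorphism $\phi : \BS(m,n) \to \Comm(F_2)$. The conjugator $g$ in $\gamma$ is then chosen (for instance $g = a^{-1} b a$, so that $g^{-1} b^{m} g$ sits on a different branch of the Bass--Serre tree and is not forced to commute with $b^{m}$) precisely so that $\gamma \neq 1$; this is where I expect to lean on the explicit models of $\BS(m,n)$ in $\Comm(F_2)$ from \cite{BS2019}. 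To finish I represent $\phi(\gamma)$ by a concrete partial isomorphism of $F_2$ and show it disagrees with the identity on every finite-index subgroup over which both are defined, which, using the unique-root property of $F_2$, is exactly nontriviality in $\Comm(F_2)$.

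The main obstacle is part (B): producing commensurators $\alpha, \beta$ whose relation $\alpha^{-1}\beta^{m}\alpha = \beta^{n}$ holds on a common finite-index subgroup while simultaneously guaranteeing that $\phi(\gamma)$ moves points, i.e. that the two partial automorphisms whose composite is $\phi(\gamma)$ cannot be reconciled on any finite-index subgroup. Part (A) is essentially formal once the order estimate $v_p(d) \le i$ is in hand, and it holds for every choice of conjugator $g$; the real content is coordinating that freedom in $g$ with the geometry of the $\Comm(F_2)$-representation so that the same $\gamma$ is both residually invisible and commensurator-detectable.
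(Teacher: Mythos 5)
Your part (A) is correct, and it is actually a genuinely different (and more self-contained) route than the paper takes for that step: the paper simply cites Meskin for a word in the residual finiteness kernel, whereas you prove from scratch that in any finite quotient the subgroup $N = \langle \bar b^{\,m}\rangle$ has order coprime to $p$, is normalized by both generators, and is abelian, so that $[\,b^m, g^{-1}b^m g\,]$ dies in every finite quotient for \emph{every} word $g$. The order computation ($v_p(d)\le i$ from equality of orders of conjugate elements) and the normality argument are both sound. This buys independence from \cite{meskin}, at the cost of working with a slightly different word than the paper's $[a^{-1},b,b^{p^k}]$, which is harmless since your argument covers your word.

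The genuine gap is part (B), which is the entire content of the lemma as the paper proves it: you never construct the commensurators, you only restate what they must do, and you yourself flag the coordination of the relation with nontriviality as ``the main obstacle.'' Leaning on ``the explicit models of $\BS(m,n)$ in $\Comm(F_2)$ from \cite{BS2019}'' cannot close this gap, because that reference only builds the model for $\BS(2,3)$; producing models for all pairs of distinct prime powers is precisely what this paper was written to do. What you are missing is the observation that makes the coordination problem dissolve. Take $\Delta_1, \Delta_2$ to be the kernels of the two maps $F_2 = \langle A,B\rangle \to \Z/m\Z \times \Z/n\Z$ sending $(A,B)$ to $((1,0),(0,1))$ and to $((0,1),(1,0))$ respectively; let $\phi$ be represented by global conjugation by $A$, and let $\psi$ be represented by \emph{any} isomorphism $g : \Delta_1 \to \Delta_2$ with $g(A^m) = A^n$. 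Since $A^m \in \Delta_1$ and $\Delta_1$ is normal, $g(A^m x A^{-m}) = g(A^m)\,g(x)\,g(A^m)^{-1} = A^n g(x) A^{-n}$ for all $x \in \Delta_1$, so the relation $\psi\phi^m\psi^{-1} = \phi^n$ holds in $\Comm(F_2)$ automatically, for every such $g$. The relation therefore consumes only the value of $g$ on the single Nielsen--Schreier generator $A^m$, leaving $g$ completely free on the remaining generators of $\Delta_1$ (e.g.\ the conjugates $[A,B^i]^{A^j}$ and $(B^n)^{A^j}$); that freedom is then spent making $\Phi(\gamma)$ move a concrete element, as the paper does by prescribing $g$ on four basis elements and computing that the two sides of the commutator send $[A,B]$ to $[A,B]^A$ and to $(B^m)^{A^{p^k}}$ respectively. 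Without this decoupling of ``relation'' from ``nontriviality,'' and without the explicit finite computation it enables, your part (B) remains a plan rather than a proof.
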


\begin{proof}

Let $m=p^k, n=p^l$, and without loss of generality, we say that $k<l$. From Meskin, $\gamma = [a^{-1},b,b^{p^k}] = [ab^{-1}a^{-1}b,b^{p^k}]$ is in the residual finiteness kernel of BS($m,n$). We now show that $\gamma$ is non-trivial via injection into Comm($F_2$). 

Note that $\gamma \neq 1$ if and only if $b\gamma b^{-1} \neq 1$, and so it suffices to check that $ab^{-1}a^{-1}$ and $b^{p^k}$ do not commute. 

We now construct a homomorphism from BS($m,n$) to Comm($F_2$). Let $F_2 = \langle A,B \rangle$. Let $\pi_1 : F_2 \rightarrow \mathbb{Z}/m \mathbb{Z} \times \mathbb{Z}/n\mathbb{Z}$ be the map given by $A \mapsto$ (1,0) and $B \mapsto (0,1)$. Let $\pi_2 : F_2 \rightarrow \mathbb{Z}/m \mathbb{Z} \times \mathbb{Z}/n\mathbb{Z}$ be the map given by $A \mapsto$ (0,1) and $B \mapsto (1,0)$. Let $\Delta_1 =$ ker($\pi_1$) and $\Delta_2 =$ ker($\pi_2$).

Let $\phi$ be the commensurator with representative  $f : F_2 \rightarrow F_2$ given by $X \mapsto AXA^{-1}$. Let $\psi$ be the commensurator with representative $g: \Delta_1 \rightarrow \Delta_2$, such that $g(A^m) = A^{n}$. Then the commensurator $\psi \circ \phi^{m} \circ \psi^{-1}$ has a representative $f = g \circ f^m \circ g^{-1}$, such that for every $\gamma \in \Delta_2$, 
\begin{center}
    $f(\gamma) =  b \circ a^m \circ b^{-1}(\gamma) = b(A^m b^{-1}(\gamma) A^{-m}) = A^{n}(b \circ b^{-1}(\gamma)) A^{n} = A^{n}\gamma A^{-n} = a^{n}$,
\end{center}
and so $\psi \circ \phi^{m} \circ \psi^{-1} = \phi^{n}$. We now define a homomorphism $\Phi :$ BS($m, n) \rightarrow$ Comm($F_2)$ by the map $a \mapsto \psi, b \mapsto \phi$. As we have just verified, $\Phi$ vanishes on the relator. 

We now verify that $\Phi(ab^{-1}a^{-1}b^{p^k}) \neq \Phi(b^{p^k}ab^{-1}a^{-1})$. In our construction, we have specified that $\psi$ is a commensurator with representative $g$ where $g(A^m) = A^{n}$. We are thus free to specify any such map $g$ which sends generating elements of $\Delta_1$ to generating elements of $\Delta_2$. By the Nielsen-Schreier algorithm, we can generate the following bases for $\Delta_1$ and $\Delta_2$:

\begin{center}
\begin{tabular}{ c c c c c }
 $S_{\Delta_1} = \{$ & $[A,B]$, & $[A,B]^A$, & $\cdots$ & $[A,B]^{A^{m-1}}$,\\ 
 & $[A,B^2]$, & $[A,B^2]^A$, & $\cdots$ & $[A,B^2]^{A^{m-1}}$,\\  
 & $\vdots$ & $\vdots$ & $\ddots$ &  $\vdots$  \\
 & $[A,B^{n-1}]$, & $[A,B^{n-1}]^A$, & $\cdots$ & $[A,B^{n-1}]^{A^{m-1}}$,\\
 & $B^n$, & $(B^n)^{A}$, & $\cdots$ & $(B^n)^{A^{m-1}}$, \\
 & $A^m$ \} & & &
 
\end{tabular}
\end{center}

\begin{center}
\begin{tabular}{ c c c c c }
 $S_{\Delta_2} = \{$ & $[A,B]$, & $[A,B]^A$, & $\cdots$ & $[A,B]^{A^{n-1}}$,\\ 
 & $[A,B^2]$, & $[A,B^2]^A$, & $\cdots$ & $[A,B^2]^{A^{n-1}}$,\\  
 & $\vdots$ & $\vdots$ & $\ddots$ &  $\vdots$  \\
 & $[A,B^{m-1}]$, & $[A,B^{m-1}]^A$, & $\cdots$ & $[A,B^{m-1}]^{A^{n-1}}$,\\
 & $B^m$, & $(B^m)^{A}$, & $\cdots$ & $(B^m)^{A^{n-1}}$, \\
 & $A^n$ \} & & &  
\end{tabular}
\end{center}

Now we define $g$ as follows: $[A,B] \rightarrow [A,B]^{A}, [A,B]^{A} \rightarrow [A,B]^{A^{p^{k}}}, [A,B^2]^A \rightarrow [A,B], [A,B^2] \rightarrow B^m$. Then we have 

\begin{equation*}
\begin{split}
\Phi(ab^{-1}a^{-1}b^{p^k})([A,B]) & = gf^{-1}g^{-1}f^{p^k}([A,B]) \\
 & = gf^{-1}g^{-1}([A,B]^{A^{p^k}}) \\
 & = gf^{-1}([A,B]^{A}) \\
 & = g([A,B]) \\
 & = [A,B]^A
\end{split}
\end{equation*}
\noindent
but

\begin{equation*}
\begin{split}
\Phi(b^{p^k}ab^{-1}a^{-1})([A,B]) & = f^{p^k}gf^{-1}g^{-1}([A,B]) \\
 & = f^{p^k}gf^{-1}([A,B^2]^A) \\
 & = f^{p^k}g([A,B^2]) \\
 & = f^{p^k}(B^m) \\
 & = (B^m)^{A^{p^k}}
\end{split}
\end{equation*}
\noindent
and so $\Phi(ab^{-1}a^{-1}b^{p^k}) \neq \Phi(b^{p^k}ab^{-1}a^{-1})$. Thus the two words in BS($m,n$) do not commute, and so $\gamma$ in the residual finiteness kernel of BS($m,n$) is non-trivial. 
\end{proof}

\begin{lemma}
If $m,n$ are not powers of the same prime, do not have the same prime divisors, and $|m| \neq |n|, |m| \neq 1, |n| \neq 1$, then BS($m,n$) is not residually finite. 
\end{lemma}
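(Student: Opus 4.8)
The plan is to reuse the machinery of the previous lemma essentially verbatim, changing only the Meskin word $\gamma$ and the explicit bijection defining $\psi$. Since $m$ and $n$ have different sets of prime divisors, after possibly exchanging them (using $BS(m,n) \cong BS(n,m)$, obtained by replacing $a$ with $a^{-1}$) there is a prime $p$ with $p \mid m$ and $p \nmid n$. Meskin's analysis then furnishes a word $\gamma$ lying in the residual finiteness kernel of $BS(m,n)$; as in the first case I expect it to be a commutator of a shape like $[a^{\pm 1} b^{-1} a^{\mp 1}, b^{s}]$ (up to conjugation by a power of $b$), so that $\gamma$ is nontrivial as soon as its two commutator entries have non-commuting images. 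I would record this reduction first, exactly as in the previous lemma where $\gamma \neq 1$ was reduced to the non-commutation of $ab^{-1}a^{-1}$ and $b^{p^k}$.

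Next I would build the homomorphism $\Phi : BS(m,n) \to \Comm(F_2)$ by the identical recipe. Let $\pi_1, \pi_2 : F_2 \to \mathbb{Z}/m\mathbb{Z} \times \mathbb{Z}/n\mathbb{Z}$ be the two maps from the previous lemma, let $\Delta_i = \ker \pi_i$, let $\phi$ be conjugation by $A$, and let $\psi$ be represented by any isomorphism $g : \Delta_1 \to \Delta_2$ subject to the single constraint $g(A^m) = A^n$. The relator computation showing $\psi \circ \phi^m \circ \psi^{-1} = \phi^n$, and hence that $a \mapsto \psi$, $b \mapsto \phi$ defines a homomorphism, uses only this constraint, so it carries over word for word and no new verification is needed here.

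The real work is the explicit choice of $g$. By Nielsen–Schreier, $\Delta_1$ and $\Delta_2$ are free of equal rank $1 + mn$, with bases $S_{\Delta_1}$, $S_{\Delta_2}$ of exactly the shape displayed in the previous lemma (with the roles of $m$ and $n$ interchanged between the two lists). Any bijection between these bases that sends $A^m$ to $A^n$ extends to a legitimate $\psi$, so I am free to prescribe $g$ on finitely many basis elements and leave the rest arbitrary. I would choose these values, guided by the exponent $s$ and the prime $p$ appearing in $\gamma$, so that evaluating the two entries of the relevant commutator on a single test generator such as $[A,B]$ produces visibly different outputs, mirroring the two aligned computations in the previous lemma that terminated in $[A,B]^A$ versus $(B^m)^{A^{p^k}}$.

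The main obstacle is precisely this last step: pinning down a partial definition of $g$ that is simultaneously consistent with a basis bijection respecting $g(A^m)=A^n$ and sensitive enough to the exponents of $\gamma$, which are now governed by a prime dividing $m$ but not $n$ rather than by two powers of a common prime, to force the two images apart. The condition $p \nmid n$ is what supplies the needed asymmetry between the $A$-orbit lengths $m$ and $n$ in the two bases, and I expect the bookkeeping of how $\phi$ permutes the conjugates $[A,B^j]^{A^i}$ to be the part demanding the most care.
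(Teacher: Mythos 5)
There is a genuine gap here: your proposal reproduces the scaffolding of the previous lemma (which does indeed carry over verbatim --- the construction of $\pi_1,\pi_2$, $\Delta_1,\Delta_2$, $\phi$, $\psi$, and the relator verification $\psi\circ\phi^m\circ\psi^{-1}=\phi^n$ are identical in the paper), but it stops exactly where the actual content of this case begins. You never pin down Meskin's word: the paper uses $\gamma=[aba^{-1},\,b^{m/p}]$, where $p$ divides $m$ but not $n$, whereas you only guess a shape of the form $[a^{\pm1}b^{-1}a^{\mp1},b^{s}]$, which is not the right word (note it conjugates $b$, not $b^{-1}$, and the exponent is specifically $s=m/p$). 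Since the entire subsequent computation is dictated by the exact exponents in $\gamma$, leaving the word unspecified means the verification cannot even be set up. More importantly, you never produce the partial basis assignment for $g$, and you acknowledge this yourself as ``the main obstacle.'' That assignment \emph{is} the proof: the paper takes, for $m/p\neq 1$, the values $[A,B]\mapsto[A,B]^{A^{m/p}}$, $[A,B]^{A}\mapsto[A,B]^{A}$, $[A,B^2]\mapsto[A,B]$, $[A,B^2]^{A}\mapsto B^m$, and then checks by direct evaluation that
\[
\Phi(aba^{-1}b^{m/p})([A,B])=[A,B]^{A}
\qquad\text{while}\qquad
\Phi(b^{m/p}aba^{-1})([A,B])=(B^m)^{A^{m/p}},
\]
so the two images of $\gamma$'s commutator entries fail to commute.

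There is also a subtlety you anticipated but did not resolve, and which the paper must handle by splitting into cases: when $m/p=1$ the naive assignment above collides, since $[A,B]^{A^{m/p}}=[A,B]^{A}$ would be the image of both $[A,B]$ and $[A,B]^{A}$, destroying injectivity on the basis. The paper fixes this by redefining $[A,B]^{A}\mapsto[A,B]^{A^{2}}$ in that case and redoing the evaluation (obtaining $[A,B]^{A^2}$ versus $(B^m)^{A^{m/p}}$). So your instinct about where the difficulty lies is sound, and your reduction of $\gamma\neq 1$ to non-commutation of images is correct, but as written the proposal is a plan rather than a proof: the choice of $g$, the case split on $m/p$, and the two evaluations that force the images apart are all missing.
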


\begin{proof}
Without loss of generality, we may assume that $m<n$ and that there is a prime $p$ which divides $m$ but not $n$. From Meskin, $\gamma = [aba^{-1},b^{m/p}]$ is in the residual finiteness kernel of BS($m,n$). We now show that $\gamma$ is non-trivial via injection into Comm($F_2$). By definition, $\gamma \neq 1$ if and only if $aba^{-1}$ and $b^{m/p}$ do not commute. Moreover, two terms do not commute if their images under homomorphism do not commute. 

We now construct a homomorphism from BS($m,n$) to Comm($F_2$). Let $F_2 = \langle A,B \rangle$. Let $\pi_1 : F_2 \rightarrow \mathbb{Z}/m \mathbb{Z} \times \mathbb{Z}/n\mathbb{Z}$ be the map given by $A \mapsto$ (1,0) and $B \mapsto (0,1)$. Let $\pi_2 : F_2 \rightarrow \mathbb{Z}/m \mathbb{Z} \times \mathbb{Z}/n\mathbb{Z}$ be the map given by $A \mapsto$ (0,1) and $B \mapsto (1,0)$. Let $\Delta_1 =$ ker($\pi_1$) and $\Delta_2 =$ ker($\pi_2$).

Let $\phi$ be the commensurator with representative  $f : F_2 \rightarrow F_2$ given by $X \mapsto AXA^{-1}$. Let $\psi$ be the commensurator with representative $g: \Delta_1 \rightarrow \Delta_2$, such that $g(A^m) = A^{n}$. Then the commensurator $\psi \circ \phi^{m} \circ \psi^{-1}$ has a representative $f = g \circ f^m \circ g^{-1}$, such that for every $\gamma \in \Delta_2$, 
\begin{center}
    $f(\gamma) =  b \circ a^m \circ b^{-1}(\gamma) = b(A^m b^{-1}(\gamma) A^{-m}) = A^{n}(b \circ b^{-1}(\gamma)) A^{n} = A^{n}\gamma A^{-n} = a^{n}$,
\end{center}
and so $\psi \circ \phi^{m} \circ \psi^{-1} = \phi^{n}$. We now define a homomorphism $\Phi :$ BS($m, n) \rightarrow$ Comm($F_2)$ by the map $a \mapsto \psi, b \mapsto \phi$. As we have just verified, $\Phi$ vanishes on the relator. 

We now verify that $\Phi(aba^{-1}) \neq \Phi(b^{m/p})$. In our construction, we have specified that $\psi$ is a commensurator with representative $g$ where $g(A^m) = A^{n}$. We are thus free to specify any map $g$ which sends generating elements of $\Delta_1$ to generating elements of $\Delta_2$. By the Nielsen-Schreier algorithm, we can generate the following bases for $\Delta_1$ and $\Delta_2$:

\begin{center}
\begin{tabular}{ c c c c c }
 $S_{\Delta_1} = \{$ & $[A,B]$, & $[A,B]^A$, & $\cdots$ & $[A,B]^{A^{m-1}}$,\\ 
 & $[A,B^2]$, & $[A,B^2]^A$, & $\cdots$ & $[A,B^2]^{A^{m-1}}$,\\  
 & $\vdots$ & $\vdots$ & $\ddots$ &  $\vdots$  \\
 & $[A,B^{n-1}]$, & $[A,B^{n-1}]^A$, & $\cdots$ & $[A,B^{n-1}]^{A^{m-1}}$,\\
 & $B^n$, & $(B^n)^{A}$, & $\cdots$ & $(B^n)^{A^{m-1}}$, \\
 & $A^m$ \} & & &
 
\end{tabular}
\end{center}

\begin{center}
\begin{tabular}{ c c c c c }
 $S_{\Delta_2} = \{$ & $[A,B]$, & $[A,B]^A$, & $\cdots$ & $[A,B]^{A^{n-1}}$,\\ 
 & $[A,B^2]$, & $[A,B^2]^A$, & $\cdots$ & $[A,B^2]^{A^{n-1}}$,\\  
 & $\vdots$ & $\vdots$ & $\ddots$ &  $\vdots$  \\
 & $[A,B^{m-1}]$, & $[A,B^{m-1}]^A$, & $\cdots$ & $[A,B^{m-1}]^{A^{n-1}}$,\\
 & $B^m$, & $(B^m)^{A}$, & $\cdots$ & $(B^m)^{A^{n-1}}$, \\
 & $A^n$ \} & & &  
\end{tabular}
\end{center}

Now we define $g$ conditional on $m/p$ (note the change in the image of $[A,B]$). If $m/p \neq 1, [A,B] \rightarrow [A,B]^{A^{m/p}}, [A,B]^{A} \rightarrow [A,B]^{A}, [A,B^2] \rightarrow [A,B], [A,B^2]^A \rightarrow B^m$.  Then we have 

\begin{equation*}
\begin{split}
\Phi(aba^{-1}b^{m/p})([A,B]) & = gfg^{-1}f^{m/p}([A,B]) \\
 & = gfg^{-1}([A,B]^{A^{m/p}}) \\
 & = gf([A,B]) \\
 & = g([A,B]^A) \\
 & = [A,B]^A
\end{split}
\end{equation*}

\noindent
but

\begin{equation*}
\begin{split}
\Phi(b^{m/p}aba^{-1})([A,B]) & = f^{m/p}gfg^{-1}([A,B]) \\
 & = f^{m/p}gf([A,B^2]) \\
 & = f^{m/p}g([A,B^2]^A) \\
 & = f^{m/p}(B^m) \\
 & = (B^m)^{A^{m/p}}.
\end{split}
\end{equation*}

If $m/p = 1, [A,B] \rightarrow [A,B]^{A^{m/p}}, [A,B]^{A} \rightarrow [A,B]^{A^2}, [A,B^2] \rightarrow [A,B], [A,B^2]^A \rightarrow B^m$.

\begin{equation*}
\begin{split}
\Phi(aba^{-1}b^{m/p})([A,B]) & = gfg^{-1}f^{m/p}([A,B]) \\
 & = gfg^{-1}([A,B]^{A^{m/p}}) \\
 & = gf([A,B]) \\
 & = g([A,B]^A) \\
 & = [A,B]^{A^2}
\end{split}
\end{equation*}

\noindent
but

\begin{equation*}
\begin{split}
\Phi(b^{m/p}aba^{-1})([A,B]) & = f^{m/p}gfg^{-1}([A,B]) \\
 & = f^{m/p}gf([A,B^2]) \\
 & = f^{m/p}g([A,B^2]^A) \\
 & = f^{m/p}(B^m) \\
 & = (B^m)^{A^{m/p}}.
\end{split}
\end{equation*}

\noindent
and so in both cases $\Phi(aba^{-1}b^{m/p}) \neq \Phi(b^{m/p}aba^{-1})$. Thus the two words in BS($m,n$) do not commute, and so $\gamma$ in the residual finiteness kernel of BS($m,n$) is non-trivial. 
\end{proof}

\begin{lemma}
If $m,n$ are not powers of the same prime, have the same prime divisors, and $|m| \neq |n|, |m| \neq 1, |n| \neq 1$, then BS($m,n$) is not residually finite. 
\end{lemma}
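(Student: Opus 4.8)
The plan is to follow the template of the two previous lemmas: extract a word $\gamma$ in the residual finiteness kernel of $\mathrm{BS}(m,n)$ from Meskin's work, build the same homomorphism $\Phi : \mathrm{BS}(m,n) \to \mathrm{Comm}(F_2)$ determined by $a \mapsto \psi$, $b \mapsto \phi$, and then choose the Nielsen--Schreier basis isomorphism $g$ so that evaluating $\Phi(\gamma)$ at $[A,B]$ returns two distinct elements of $F_2$. Since $\Phi$ is a homomorphism, a nontrivial image forces $\gamma \neq 1$ in $\mathrm{BS}(m,n)$, which completes the proof.

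First I would fix the word. Because $m$ and $n$ share every prime divisor, no prime divides one but not the other, so the divisibility trick of the previous lemma is unavailable; instead I exploit a prime occurring to unequal powers. After replacing $\mathrm{BS}(m,n)$ by the isomorphic group $\mathrm{BS}(n,m)$ if necessary, assume $|m| < |n|$. Then $n \nmid m$, so there is a prime $p$ with $v_p(n) > v_p(m) \ge 1$ (here $v_p$ denotes the $p$-adic valuation, and $v_p(m)\ge 1$ because $p$ divides $m$ as well). I take $\gamma = [aba^{-1}, b^{n/p}]$, which is nontrivial exactly when $aba^{-1}$ and $b^{n/p}$ fail to commute. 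That $\gamma$ lies in the residual finiteness kernel is Meskin's input, and is elementary: in any finite quotient the relation forces $\overline{b}^{\,m}$ and $\overline{b}^{\,n}$ to be conjugate, hence $\gcd(r,m) = \gcd(r,n)$ where $r = \mathrm{ord}(\overline b)$; this places $\overline{b}^{\,n/p}$ in the cyclic group $C = \langle \overline{b}^{\,\gcd(r,m)}\rangle$, which is normalized by $\overline a$ and centralized by $\overline b$, so $\overline{aba^{-1}}$ centralizes $\overline{b}^{\,n/p}$ and $\overline{\gamma} = 1$.

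Next I would reuse the $\Phi$-construction verbatim and define $g : \Delta_1 \to \Delta_2$ on the displayed bases by $[A,B] \mapsto [A,B]^{A^{n/p}}$, $[A,B]^A \mapsto [A,B]^A$, $[A,B^2] \mapsto [A,B]$, $[A,B^2]^A \mapsto B^m$, extended arbitrarily subject to $g(A^m) = A^n$. The calculation is then identical to the nontrivial case of the previous lemma with $m/p$ replaced by $n/p$: one finds $\Phi(aba^{-1}b^{n/p})([A,B]) = [A,B]^A$, whereas $\Phi(b^{n/p}aba^{-1})([A,B]) = (B^m)^{A^{n/p}}$, and these are distinct reduced words in $F_2$.

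The crux, and the step I would watch most carefully, is the legality of $g$: one must know that $[A,B]^{A^{n/p}}$ is genuinely a basis element of $\Delta_2$ and is distinct from $[A,B]^A$. This is precisely why I pass to the orientation $|m| < |n|$ and use the exponent $n/p$ rather than $m/p$. The basis $S_{\Delta_2}$ contains only the conjugates $[A,B]^{A^i}$ with $0 \le i \le n-1$, and $n/p$ is a proper divisor of $n$, so $1 < n/p < n$ keeps the exponent in range and different from $1$ (using that $n$, having at least two prime factors, satisfies $n/p \ge 2$). This single choice also absorbs the delicate divisibility sub-case $m \mid n$, in which no prime has $v_p(m) > v_p(n)$ at all; working with $n/p$ handles it uniformly. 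Once $g$ is legal, the remaining manipulations are the same formal bookkeeping as in the previous lemma, and nontriviality of $\Phi(\gamma)$ yields $\gamma \neq 1$, so $\mathrm{BS}(m,n)$ is not residually finite.
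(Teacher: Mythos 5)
Your proposal is correct, and it reuses the paper's general machinery (the homomorphism $\Phi\colon \mathrm{BS}(m,n)\to\Comm(F_2)$ with $a\mapsto\psi$, $b\mapsto\phi$, the relator check valid for any $g$ with $g(A^m)=A^n$, the Nielsen--Schreier bases for $\Delta_1,\Delta_2$, and evaluation of two representatives at $[A,B]$), but it takes a genuinely different route on the lemma-specific ingredient: the choice of the word in the residual finiteness kernel. The paper picks a common divisor $k$ of $m$ and $n$ so that $m/k$ and $n/k$ share no prime divisor, takes $p$ dividing $m/k$ but not $n/k$, and works with $\gamma=[ab^{k}a^{-1},b^{m/p}]$; its $g$ must then involve the basis elements $[A,B]^{A^{k}}$ and $[A,B^2]^{A^{k}}$, and the two computed values are $B^m$ versus $[A,B^2]^{A^{m/p}}$. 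You instead normalize so that $|m|<|n|$, choose $p$ with $v_p(n)>v_p(m)$, and use $\gamma=[aba^{-1},b^{n/p}]$, which lets you recycle verbatim the $g$ and the computation from the previous lemma with exponent $n/p$, yielding $[A,B]^A$ versus $(B^m)^{A^{n/p}}$; your observation that $n/p\ge 2$ (since $n$ has at least two distinct prime factors) is exactly what keeps $g$ legal, and it collapses the case division between the second and third lemmas. Your valuation argument for kernel membership is also sound and is a genuine addition: from $\gcd(r,m)=\gcd(r,n)$ and $v_p(m)<v_p(n)$ one gets $v_p(r)\le v_p(m)$, hence $\gcd(r,m)$ divides $n/p$, and the subgroup $\langle \bar b^{\gcd(r,m)}\rangle=\langle\bar b^m\rangle=\langle\bar b^n\rangle$ is $\bar a$-invariant and centralized by $\bar b$, so $\bar\gamma=1$ in every finite quotient --- a step the paper outsources entirely to Meskin. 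What the paper's route buys is closer adherence to Meskin's original case analysis; what yours buys is uniformity across the two non-prime-power cases and a self-contained proof that the chosen word dies in all finite quotients.
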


\begin{proof}

There exists some $k$ which divides both $m$ and $n$, such that $m/k$ and $n/k$ do not have the same prime divisors. Once again, without loss of generality, we may assume that there is a prime $p$ which divides $m/k$ but not $n/k$. From Meskin, $\gamma = [ab^{k}a^{-1},b^{m/p}]$ is in the residual finiteness kernel of BS($m,n$). We now show that $\gamma$ is non-trivial via injection into Comm($F_2$). By definition, $\gamma \neq 1$ if and only if $ab^{k}a^{-1}$ and $b^{m/p}$ do not commute. Moreover, two terms do not commute if their images under homomorphism do not commute. 

We now construct a homomorphism from BS($m,n$) to Comm($F_2$). Let $F_2 = \langle A,B \rangle$. Let $\pi_1 : F_2 \rightarrow \mathbb{Z}/m \mathbb{Z} \times \mathbb{Z}/n\mathbb{Z}$ be the map given by $A \mapsto$ (1,0) and $B \mapsto (0,1)$. Let $\pi_2 : F_2 \rightarrow \mathbb{Z}/m \mathbb{Z} \times \mathbb{Z}/n\mathbb{Z}$ be the map given by $A \mapsto$ (0,1) and $B \mapsto (1,0)$. Let $\Delta_1 =$ ker($\pi_1$) and $\Delta_2 =$ ker($\pi_2$).

Let $\phi$ be the commensurator with representative  $f : F_2 \rightarrow F_2$ given by $X \mapsto AXA^{-1}$. Let $\psi$ be the commensurator with representative $g: \Delta_1 \rightarrow \Delta_2$, such that $g(A^m) = A^{n}$. Then the commensurator $\psi \circ \phi^{m} \circ \psi^{-1}$ has a representative $f = g \circ f^m \circ g^{-1}$, such that for every $\gamma \in \Delta_2$, 
\begin{center}
    $f(\gamma) =  b \circ a^m \circ b^{-1}(\gamma) = b(A^m b^{-1}(\gamma) A^{-m}) = A^{n}(b \circ b^{-1}(\gamma)) A^{n} = A^{n}\gamma A^{-n} = a^{n}$,
\end{center}
and so $\psi \circ \phi^{m} \circ \psi^{-1} = \phi^{n}$. We now define a homomorphism $\Phi :$ BS($m, n) \rightarrow$ Comm($F_2)$ by the map $a \mapsto \psi, b \mapsto \phi$. As we have just verified, $\Phi$ vanishes on the relator. 

We now verify that $\Phi(ab^{k}a^{-1}) \neq \Phi(b^{m/p})$. In our construction, we have specified that $\psi$ is a commensurator with representative $g$ where $g(A^m) = A^{n}$. We are thus free to specify any map $g$ which sends generating elements of $\Delta_1$ to generating elements of $\Delta_2$. By the Nielsen-Schreier algorithm, we can generate the following bases for $\Delta_1$ and $\Delta_2$:

\begin{center}
\begin{tabular}{ c c c c c }
 $S_{\Delta_1} = \{$ & $[A,B]$, & $[A,B]^A$, & $\cdots$ & $[A,B]^{A^{m-1}}$,\\ 
 & $[A,B^2]$, & $[A,B^2]^A$, & $\cdots$ & $[A,B^2]^{A^{m-1}}$,\\  
 & $\vdots$ & $\vdots$ & $\ddots$ &  $\vdots$  \\
 & $[A,B^{n-1}]$, & $[A,B^{n-1}]^A$, & $\cdots$ & $[A,B^{n-1}]^{A^{m-1}}$,\\
 & $B^n$, & $(B^n)^{A}$, & $\cdots$ & $(B^n)^{A^{m-1}}$, \\
 & $A^m$ \} & & &
 
\end{tabular}
\end{center}

\begin{center}
\begin{tabular}{ c c c c c }
 $S_{\Delta_2} = \{$ & $[A,B]$, & $[A,B]^A$, & $\cdots$ & $[A,B]^{A^{n-1}}$,\\ 
 & $[A,B^2]$, & $[A,B^2]^A$, & $\cdots$ & $[A,B^2]^{A^{n-1}}$,\\  
 & $\vdots$ & $\vdots$ & $\ddots$ &  $\vdots$  \\
 & $[A,B^{m-1}]$, & $[A,B^{m-1}]^A$, & $\cdots$ & $[A,B^{m-1}]^{A^{n-1}}$,\\
 & $B^m$, & $(B^m)^{A}$, & $\cdots$ & $(B^m)^{A^{n-1}}$, \\
 & $A^n$ \} & & &  
\end{tabular}
\end{center}

Now we define $g$ as follows: $[A,B] \rightarrow [A,B]^{A^{m/p}}, [A,B]^{A^{k}} \rightarrow B^m, [A,B^2] \rightarrow [A,B], [A,B^2]^{A^{k}} \rightarrow [A,B^2]$. Then we have 

\begin{equation*}
\begin{split}
\Phi(ab^{k}a^{-1})([A,B]) & = gf^{k}g^{-1}f^{m/p}([A,B]) \\
 & = gf^{k}g^{-1}([A,B]^{A^{m/p}}) \\
 & = gf^{k}([A,B]) \\
 & = g([A,B]^{A^{k}}) \\
 & = B^m
\end{split}
\end{equation*}
\noindent
but

\begin{equation*}
\begin{split}
\Phi(b^{m/p}ab^{k}a^{-1})([A,B]) & = f^{m/p}gf^{k}g^{-1}([A,B]) \\
 & = f^{m/p}gf^{k}([A,B^2]) \\
 & = f^{m/p}g([A,B^2]^{A^{k}}) \\
 & = f^{m/p}([A,B^2]) \\
 & = [A,B^2]^{A^{m/p}}
\end{split}
\end{equation*}
\noindent
and so $\Phi(ab^{k}a^{-1}b^{m/p}) \neq \Phi(b^{m/p}ab^{k}a^{-1})$. Thus the two words in BS($m,n$) do not commute, and so $\gamma$ in the residual finiteness kernel of BS($m,n$) is non-trivial. 
\end{proof}

\section{Abstract commensurators of pro-$p$ completions}
 \label{sec:prop}
By construction, the images of $BS(p, p^2)$ produced above lie inside the subgroup of $\Comm(F_2)$ consisting of isomorphisms between $p$-power subnormal subgroups of $F_2$. 
That is, $BS(p, p^2)$ embeds inside $P_{p,k} = \Comm_p(F_k)$ for any prime $p$ and any natural number $k > 2$.
By the following lemma, we have that $\Comm(\widehat{F_2}^p)$ contains a non-residually finite quotient of $BS(p,p^2)$.
We refer the reader to \cite{MR713786} and \cite{MR1691054} for the basics on profinite groups and pro-$p$ completions of groups.

\begin{lemma}
$P_{p,k}$ embeds inside $\Comm(\widehat{F_2}^p)$.
\end{lemma}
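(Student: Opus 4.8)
The plan is to construct the embedding by functoriality of the pro-$p$ completion, after identifying the subnormal subgroups of $p$-power index in $F_k$ with the open subgroups of $\widehat{F_k}^p$. Write $q \colon F_k \to \widehat{F_k}^p$ for the canonical map and, for a subgroup $H \le F_k$, let $\overline{H}$ denote the closure of $q(H)$ in $\widehat{F_k}^p$. Given a representative $\psi \colon H_1 \to H_2$ of an element of $P_{p,k} = \Comm_p(F_k)$, an isomorphism of abstract groups induces an isomorphism of their pro-$p$ completions; I would first promote this to an isomorphism $\widehat{\psi} \colon \overline{H_1} \to \overline{H_2}$ of open subgroups of $\widehat{F_k}^p$ and set $\Theta([\psi]) = [\widehat{\psi}] \in \Comm(\widehat{F_k}^p)$. (For the application one takes $k = 2$.)

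The crux is a topological lemma: if $H$ is subnormal of $p$-power index in $F_k$, then $\overline{H}$ is open in $\widehat{F_k}^p$ and the natural map $\widehat{H}^p \to \overline{H}$ is an isomorphism; equivalently, the pro-$p$ topology of $F_k$ restricts to the \emph{full} pro-$p$ topology of $H$. The engine here is the algebraic observation that a subnormal subgroup of $p$-power index in a finite group contains the subgroup $O^p$ of that group: running along a subnormal series $H = H_0 \triangleleft \cdots \triangleleft H_r = F_k$ with $p$-power steps, every $p'$-element already lies in $H$, since its image in each $p$-group quotient $H_i/H_{i-1}$ is trivial. Applying this inside the finite quotient $F_k/\mathrm{core}_{F_k}(H)$ shows that $H$ contains a normal subgroup $N \triangleleft F_k$ with $F_k/N$ a finite $p$-group, so $\overline{H}$ is open. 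A short computation then checks that for any $M \triangleleft H$ of $p$-power index the intersection $M \cap N$ is again subnormal of $p$-power index in $F_k$, hence contains a normal $p$-power-index subgroup of $F_k$ inside $M$; this shows every basic open subgroup of the full pro-$p$ topology of $H$ is induced from $F_k$, giving the topology match and $\widehat{H}^p \cong \overline{H}$. This is exactly where the definition of $\Comm_p$ via subnormality and $p$-power index is forced: for a finite-index subgroup that is not of $p$-power index or not subnormal, the induced topology is strictly coarser and $\overline{H} \ne \widehat{H}^p$. I expect this identification to be the main obstacle.

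Granting the lemma, the remaining verifications are formal. For well-definedness, if two representatives agree on a subnormal $p$-power-index subgroup $H$, their completions agree on the open subgroup $\overline{H}$, hence define the same element of $\Comm(\widehat{F_k}^p)$. That $\Theta$ is a homomorphism follows from functoriality of completion together with $\overline{H_1 \cap H_2} = \overline{H_1} \cap \overline{H_2}$ for subnormal $p$-power-index subgroups, which guarantees that the domain of a composite $\Phi \circ \Psi$, namely $\Psi^{-1}(H_1' \cap H_2)$, is carried correctly to $\widehat{\Psi}^{-1}(\overline{H_1'} \cap \overline{H_2})$ after completion, since closure commutes with intersection and with the homeomorphism induced by $\Psi$.

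For injectivity I would use that $F_k$ is residually-$p$ (a classical theorem of Magnus), so $q$ is injective. If $\widehat{\psi}$ and $\widehat{\psi'}$ agree on some open subgroup $U \subseteq \overline{H_1} \cap \overline{H_1'}$, then for each $x$ in the finite-index subgroup $q^{-1}(U) \cap H_1 \cap H_1'$ we have $q(\psi(x)) = \widehat{\psi}(q(x)) = \widehat{\psi'}(q(x)) = q(\psi'(x))$, and injectivity of $q$ forces $\psi(x) = \psi'(x)$. Thus $\psi$ and $\psi'$ agree on a finite-index subgroup, so $[\psi] = [\psi']$ in $\Comm_p(F_k)$. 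Hence $\Theta$ is an injective homomorphism, proving the lemma.
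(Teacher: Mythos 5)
Your proof is correct, and its overall skeleton matches the paper's: both arguments extend a representative isomorphism $\psi\colon H_1 \to H_2$ to an isomorphism of open subgroups of the pro-$p$ completion and then check the extension map is an injective homomorphism. The difference is in what gets proved versus asserted. The paper's proof simply states that the closure of a $p$-power-index subnormal subgroup is open and that $\psi$ is ``clearly'' continuous in the topology induced from $\widehat{F_k}^p$, using the basis of closures of the subgroups $\Lambda_{p^j}$; your key lemma --- that for $H$ subnormal of $p$-power index the induced topology equals the \emph{full} pro-$p$ topology of $H$, proved by running a $p'$-element down the subnormal series to show $H$ contains a normal subgroup $N \triangleleft F_k$ with $F_k/N$ a finite $p$-group, and then iterating this on $M \cap N$ for $M \triangleleft H$ of $p$-power index --- is precisely the content hiding behind those assertions, and your remark that this is where subnormality and $p$-power index are forced is exactly the right diagnosis (for a general finite-index subgroup the induced topology is coarser and $\overline{H} \not\cong \widehat{H}^p$). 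You also diverge on injectivity: the paper quotes the unique root property of $F_2$ (a non-identity commensurator is non-identity on every finite-index subgroup where it is defined), whereas you use residual $p$-finiteness of free groups to pull agreement of $\widehat{\psi}$ and $\widehat{\psi'}$ on an open subgroup back to agreement of $\psi$ and $\psi'$ on a finite-index subgroup; both are valid, and yours has the advantage of staying inside pro-$p$ technology. The only steps you assert without proof --- functoriality of completion and $\overline{H_1 \cap H_2} = \overline{H_1} \cap \overline{H_2}$ for these subgroups --- are standard and follow from your topology-matching lemma by an index/density count, so nothing essential is missing; if anything, your write-up is more complete than the paper's.
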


\begin{proof}
Any non-identity commensurator acts non-trivially on any finite-index subgroup that it is defined on, because $F_2$ has the unique root property \cite[Lemma 2.2]{notfingen} and \cite[Lemma 2.4]{notfingen}).
Hence, it suffices to show that any isomorphism between $p$-power index subnormal subgroups of $F_k$ extends to an isomorphism between closed subgroups of $\widehat{F_k}^p$.

Note that a basis for the topology of $\widehat{F_k}^p$ is given by the closure of the sets
$\Lambda_{p^k}$ defined to be the intersection of all normal subgroups of $F_k$ of index $p^k$.
Using this basis, it is clear that any isomorphism between finite-index $p$-power subnormal subgroups of $F_k$ is continuous under the subspace topology of $F_k$ induced by $\widehat{F_k}^p$.
Thus, as any finite-index $p$-power subnormal subgroups of $F_k$ is, by definition, dense in its closure (which is also open), there exists a unique continuation of such an isomorphism to open subgroups of $\widehat{G}^p$.
\end{proof}

\section{Further directions}

\label{sec:furtherdirections}

Our past experience with algebraic groups guides our study of $\Comm(F_2)$. Recall that the Chinese Remainder Theorem shows that arithmetic subgroups of a fixed Chavelley group decompose into local parts, which are generally easier to work with. For example, $\SL_k(\Z/n\Z) \cong \prod_{p^k || n} \SL_k(\Z/p^k \Z)$, where $p^k || n$ is the largest prime power of $p$ that divides $n$.
Our suggested questions ask whether some of the useful properties in the linear group setting hold in the abstract commensurator.

First, note that the definition of the local parts of $\Comm(F_k)$ seems to depend on $k$. However, the groups $F_2$ and $F_k$ are abstractly commensurable, so Comm$(F_2)\cong$ Comm$(F_k)$ for $k\geq 2$. So we ask: Is the same true of $P_{p,k}$ for fixed $p$?
Second, from work by Bartholdi and Bogopolski, Comm($F_2$) is not finitely generated \cite{notfingen}. Are the corresponding $P_{p,k}$ finitely generated? We note that their proof fails for $P_{p,k}$, as the natural infinite generating set candidate that the proof provides contains a finite generating set.
Third, does the collection of all local parts of $\Comm(F_k)$ generate $\Comm(F_k)$?


\twocolumn
\section{Appendix A}
Given a prime $p$, the GAP \cite{GAP4} code below defines maps $\phi$ and $\psi$ from the proof of Theorem 1.1. The code verifies that the word $\Phi(\gamma)$ is not equal to the identity map on at least one of the generators of $\Delta_2$.

\tiny{
\begin{lstlisting}[language=GAP]
#Specify p
p := 2;;

# Define the groups
f := FreeGroup("A", "B" );;
A := DirectProduct(CyclicGroup(p), CyclicGroup(p^2));;

#Create list objects of the generators of the previous groups
Genf := GeneratorsOfGroup(f);; GenA := GeneratorsOfGroup(A);;

#Create conjugation function

#Create list object for conjugation action on f
ConjGenf := [Genf[1], Genf[1]*Genf[2]*Genf[1]^-1];;

# Define conjugation map, phi:
phi := GroupHomomorphismByImages ( f, f, Genf, ConjGenf);; phi2 := Inverse(phi);;

#Use Order() and other functions to check that it gives the 
#right presentation of A. For some reason it seems to give (1,0), (0,1), (0,p).
#Is this always the order of the presentation? Code below assumes YES.

# Define the projection maps pi1 and pi2, sending the generators of f to (1,0) and (0,1)
pi1 := GroupHomomorphismByImages( f, A, Genf, GenA{[1..2]});;
pi2 := GroupHomomorphismByImages( f, A, Genf, Reversed(GenA{[1..2]}));;

# Running Rank ensures K1 and K2 are equipped with finite presentations
K1:= Kernel(pi1);; Rank(K1);;
K2:= Kernel(pi2);; Rank(K2);;

#Create generator lists for K1, K2
GenK1 := List(GeneratorsOfGroup(K1));;
GenK2 := List(GeneratorsOfGroup(K2));;

#Create permuted generator lists for K1, K2, such that A^p,A^(p^2) 
#(or inverses) appear as the first element of each list, and the
#remaining elements may be permuted in any way.

#Find the A^p elements in each list
x := Genf[1]^p; y := Genf[1]^(p^2);
if x in GenK1 then 
	xloc := Position(GenK1, x);
elif x^-1 in GenK1 then
	xloc := Position(GenK1, x^-1);
else
	Print("K1 does not contain A^p");
fi;
if y in GenK2 then 
	yloc := Position(GenK2, y);
elif y^-1 in GenK2 then
	yloc := Position(GenK2, y^-1);
else
	Print("K2 does not contain A^(p^2)");
fi;

#Reorder GenK1, GenK2
Remove(GenK1, xloc); Add(GenK1, x, 1);
Remove(GenK2, yloc); Add(GenK2, y, 1);

count := 0; debug := 0;
permLength := Length(GenK2) - 1;
listFail := [];
for perm in List(SymmetricGroup(6)) do
	ShortGenK2 := List(GenK2);
	Remove(ShortGenK2, 1);
	PermGenK2 := Permuted(ShortGenK2, perm);
	Add(PermGenK2, y, 1);

	psi:= GroupHomomorphismByImages(K1, K2, GenK1, PermGenK2);;
	psi2 := InverseGeneralMapping(psi);;

	# Evaluate the word w in the residual finiteness kernel of BS(p,p^2):
	Word := GenK2[2];; WordA := Image(phi, Word);; WordB  := Image(phi, WordA);;
	WordC := Image(phi, WordB);; Word2 := Image(phi, WordC);; 
	Word3 := Image(phi, Word2);; Word4 := Image(phi, Word3);; 
	Word5 := Image(psi2, Word4);; Word6 := Image(phi2, Word5);; 
	Word7 := Image(psi, Word6);; Word8 := Image(phi2, Word7);; 
	WordX := Image(phi2, Word8);; WordY := Image(phi2, WordX);; 
	WordZ := Image(phi2, WordY);; Word9 := Image(phi2, WordZ);; 
	Word10 := Image(psi2, Word9);; Word11 := Image(phi, Word10);; 
	Word12 := Image(psi, Word11);; Word13 := Image(phi2, Word12);; 

	debug := debug +1;

	if IsOne(Word13*Word^(-1)) = false then
		count := count + 1;
	else
		#Print(debug, "+", perm,"\n");
		pass := 0; 
		for Word in GenK2 do 
			Word := GenK2[2];; WordA := Image(phi, Word);; 
			WordB  := Image(phi, WordA);; WordC := Image(phi, WordB);; 
			Word2 := Image(phi, WordC);;	Word3 := Image(phi, Word2);; 
			Word4 := Image(phi, Word3);;	Word5 := Image(psi2, Word4);; 
			Word6 := Image(phi2, Word5);; Word7 := Image(psi, Word6);; 
			Word8 := Image(phi2, Word7);; WordX := Image(phi2, Word8);; 
			WordY := Image(phi2, WordX);; WordZ := Image(phi2, WordY);; 
			Word9 := Image(phi2, WordZ);; Word10 := Image(psi2, Word9);; 
			Word11 := Image(phi, Word10);; Word12 := Image(psi, Word11);; 
			Word13 := Image(phi2, Word12);; 
			if IsOne(Word13*Word^(-1)) = false then
				pass := 1;
				break;
			fi;
		od;
		if pass = 0 then
			count1 := count1 + 1;
			Add(listFail, perm);
		fi;
	fi;
od;
\end{lstlisting}}

\bibliography{bibliography}
\bibliographystyle{amsalpha}

\end{document}